\documentclass[a4paper,oneside,11pt]{article}

\usepackage{amsmath,amsfonts,amscd,amssymb}
\usepackage{longtable,geometry}
\usepackage[english]{babel}
\usepackage[utf8]{inputenc}
\usepackage[active]{srcltx}
\usepackage[T1]{fontenc}
\usepackage{graphicx}
\usepackage{pstricks}
\usepackage{bbm}
\usepackage{MnSymbol}

\geometry{dvips,a4paper,margin=1.4in}

%%%%%%%%%%%%%%%%%%%%%%%%%%%%%%%%%%%%%%%%%%%%%%% environments %%%%%%%%%%%%%%%%%%%%%%%%%%%%%%
\newtheorem{theorem}{Theorem}[section]
\newtheorem{conj}{Conjecture}

\newtheorem{corollary}[theorem]{Corollary}
\newtheorem{lemma}[theorem]{Lemma}
\newtheorem{proposition}[theorem]{Proposition}
\newtheorem{definition}[theorem]{Definition}

\newtheorem{remark}[theorem]{Remark}

\newtheorem{question}[conj]{Question}
  
%%%%%%%%%%%%%%%%%%%%%%%%%%%%%%% Equations et proof %%%%%%%%%%%%%%%%%%%%%%%%%%%%%
\newcommand{\be}[1]{\begin{equation}\label{#1}}
\newcommand{\ee}{\end{equation}}
\numberwithin{equation}{section}

\newenvironment{proof}[1][\relax]%s
  {\paragraph{Proof\ifx#1\relax\else~of #1\fi}}%
  {~\hfill$\square$\par\bigskip}
%s
  {\paragraph{Proof\ifx#1\relax\else~de #1\fi}}%
  {~\hfill$\square$\par\bigskip}

%%%%%%%%%%%%%%%%%%%%%%%%%%%%%%%%%%%%%%%%%%%%% cal font  %%%%%%%%%%%%%%%%%%%%%%%%%%%%%%%%%%%%%

%%%%%%%%%%%%%%%%%%%%%% frak font%%%%%%%%%%%%%%%%%%%%%

%%%%%%%%%%%%%%%%%%%%%%%%%%%% bb font%%%%%%%%%%%%%%%%%%

\newcommand{\bbC}{\mathbb{C}}

\newcommand{\bbR}{\mathbb{R}}

\newcommand{\bbZ}{\mathbb{Z}}
%%%%%%%%%%%%%%%%%%%%%%%% Sf Fonts %%%%%%%%%%%%%%%%

%%%%%%%%%%%%%%%%%%%%%%%%%%%%%%%%%% Greek letter %%%%%%%%%%%%%%%%%%%%

\renewcommand{\be}{\beta}

%%%%%%%%%%%%%%%%%%%%%%%%%%%%%%%%%% Divers %%%%%%%%%%%%%%%%%%%%%%%

\renewcommand{\Im}{\textrm{Im}}
\renewcommand{\Re}{\textrm{Re}}

%%%%%%%%%%%%%%%%%%%%%%%%%%%%%%% Remarks + divers %%%%%%%%%%%%%%%%%%%%%%%%%%%%%%%

\newcommand{\rk}[1]{\bgroup\color{red}%
  \par\medskip\hrule\smallskip%
  \noindent\textbf{#1}%
  \par\smallskip\hrule\medskip\egroup}

\title{A functional relation for $L$-functions of graphs equivalent to the Riemann Hypothesis for Dirichlet $L$-functions}
\author{Fabien Friedli%
	\thanks{The author was supported in part by the Swiss NSF grant 200021 132528/1.%
	}}

\begin{document}
\maketitle

\begin{abstract}
In this note we define $L$-functions of finite graphs and study the particular case of finite cycles in the spirit of a previous paper that studied spectral zeta functions of graphs. The main result is a suggestive equivalence between an asymptotic functional equation for these $L$-functions and the corresponding case of the Generalized Riemann Hypothesis. We also establish a relation between the positivity of such functions and the existence of real zeros in the critical strip of the classical Dirichlet $L$-functions with the same character.
\end{abstract}

\begin{section}{Introduction}

In a previous paper with Karlsson (\cite{FK16}), we initiated the study of spectral zeta functions of graphs. In particular, we studied the behaviour of the zeta function of the cyclic graph with $n$ vertices as the parameter $n$ goes to infinity. It turned out that the classical Riemann zeta function (or Epstein zeta functions in more than one dimension) appeared in the asymptotics, together with the spectral zeta function of the graph $\bbZ$. This lead to a surprising reformulation of the Riemann Hypothesis in terms of an asymptotic functional equation $s\longleftrightarrow 1-s$ for spectral zeta functions. It is natural to wonder whether this could be generalized to $L$-functions. 
\\

\noindent
In this work, we propose a definition of $L$-functions for finite graphs different from those in Stark-Terras \cite{ST00}. Given a finite graph $G$ with $m\geq 3$ vertices and a primitive character modulo $k\geq 3$ one can define a \emph{(spectral) $L$-function} of $G$ by $$L_G(s,\chi):=\sum_{j=1}^{m-1}\frac{\chi(j)}{\lambda_j^s},$$ where the sum runs over all non-zero ordered eigenvalues of the combinatorial Laplacian on $G$ and $s$ is any complex number, in analogy with the definition of spectral zeta functions in \cite{FK16}. For our purposes here we normalize things a bit differently in the case of the Cayley graph of $\bbZ/kn\bbZ$ and a character modulo $k$.

\begin{definition}
	\emph{Let $k\geq 3$ and let $\chi$ be a primitive and even ($\chi(-1)=1$) Dirichlet character modulo $k$. For $n\geq 1$, the function $L_n:\bbC\longrightarrow\bbC$ defined by $$L_n(s,\chi):=\sum_{j=1}^{kn-1}\frac{\chi(j)}{\sin(\frac{\pi j}{kn})^{s}}$$ is the \emph{(normalized) $L$-function} of the cyclic graph with $kn$ vertices $\bbZ/kn\bbZ$ associated to the character $\chi$.}
\end{definition}
In contrast to \cite{FK16}, it is not clear to us how to define appropriate $L$-functions for infinite graphs.\\
Denote by $L(s,\chi)$ the classical Dirichlet $L$-function associated to the character $\chi$, which is defined by $$L(s,\chi)=\sum_{j\geq 1}\frac{\chi(j)}{j^s}$$ when $\Re(s)>0$ and by analytic continuation elsewhere. We will prove in section 2 the following asymptotics:
\begin{proposition}\label{MainProp}
	For any $s\in\bbC$ we have, as $n\longrightarrow\infty$, $$L_n(s,\chi)=2\left(\frac{kn}{\pi}\right)^{s}\left(L(s,\chi)+\frac{s}{6}\left(\frac{kn}{\pi}\right)^{-2}L(s-2,\chi)+O(\frac{1}{n^4})\right).$$
\end{proposition}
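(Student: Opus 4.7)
The natural starting point is the symmetry $\sin(\pi j/(kn)) = \sin(\pi(kn-j)/(kn))$ combined with the evenness of $\chi$: since $\chi(kn-j) = \chi(-j) = \chi(j)$, one can pair the summands to write
\[
L_n(s,\chi) = 2\sum_{1 \le j < kn/2} \chi(j)\,\sin(\pi j/(kn))^{-s},
\]
the central term at $j=kn/2$ (present only when $kn$ is even) vanishing automatically because $\gcd(kn/2,k)\ge 2$ in every case with $k\ge 3$, forcing $\chi(kn/2)=0$. The gain is that now $\pi j/(kn)\in(0,\pi/2]$, strictly inside the domain of convergence of the Taylor expansion of $\sin(\cdot)^{-s}$ about the origin.

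Next I would apply the uniform expansion
\[
\sin(x)^{-s} = x^{-s}\Bigl(1 + \tfrac{s}{6}\,x^2 + O_s(x^4)\Bigr),\qquad x\in(0,\pi/2],
\]
valid since $(\sin x/x)^{-s}$ is analytic in $x^2$ on a neighbourhood of $[0,\pi^2/4]$ and $\sin x/x \in [2/\pi,1]$ there. Substituting $x=\pi j/(kn)$ decomposes $L_n(s,\chi)$ into two main pieces proportional to $\sum_{j\le N}\chi(j)j^{-s}$ and $\sum_{j\le N}\chi(j)j^{-(s-2)}$ (with $N=\lfloor(kn-1)/2\rfloor$), together with a remainder $\mathcal{R}_n(s)$ arising from the $O_s(x^4)$ term.

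To identify the main partial sums with $L$-values I would use the Hurwitz-zeta identity
\[
\sum_{j=1}^{Kk}\chi(j)\,j^{-t} = L(t,\chi) - k^{-t}\sum_{r=1}^{k-1}\chi(r)\,\zeta(t,K+r/k)
\]
and the Euler--Maclaurin expansion of $\zeta(t,K+r/k)$ as $K\to\infty$. The evenness identity $\sum_r\chi(r)\,r=0$ (an immediate consequence of $r\leftrightarrow k-r$) kills the would-be leading $K^{-t}$ contribution, leaving
\[
\sum_{j\le N}\chi(j)\,j^{-t} = L(t,\chi) + O_t\bigl(N^{-\Re(t)-1}\bigr),
\]
which after multiplication by $(kn/\pi)^t$ contributes an $O(1/n)$ error for each of $t=s$ and $t=s-2$.

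The main obstacle will be showing that these $O(1/n)$ errors together with $\mathcal{R}_n(s)$ combine to the claimed $O(1/n^4)$. A naive character-cancellation estimate on $\mathcal{R}_n(s)$ (Abel summation with $|\sum_{j\le J}\chi(j)|\le k$) only produces $O(1)$, which is insufficient once $\Re(s)<4$. I expect the mechanism to be as follows: keep one further Taylor term of $\sin(x)^{-s}$, so that the resulting $c_2(s)(kn/\pi)^{s-4}L(s-4,\chi)$ contribution is automatically of size $O(n^{\Re(s)-4})$ and absorbed into the error, and then verify that the sub-leading corrections from the Hurwitz expansion cancel term by term via the identity
\[
\sum_{l\ge 0} c_l(s)\,(s-2l)\,(\pi/2)^{2l-s} = 0,
\]
which one obtains by differentiating $\sum_l c_l(s)\,u^{2l-s}=\sin(u)^{-s}$ and evaluating at $u=\pi/2$ where $\cos(\pi/2)=0$. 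Turning this informal cancellation pattern into a clean, uniform $O(1/n^4)$ bound is the bulk of the technical work.
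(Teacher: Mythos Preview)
Your route---fold via $j\leftrightarrow kn-j$, Taylor-expand $\sin(u)^{-s}$, then reduce to partial character sums---is genuinely different from the paper's, and the step you flag as ``the bulk of the technical work'' is indeed the crux and is left undone.

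The paper instead groups by residue class modulo $k$ from the outset,
\[
L_n(s,\chi)=\sum_{r=1}^{k-1}\chi(r)\sum_{j=0}^{n-1}\sin\!\Bigl(\tfrac{\pi}{n}\bigl(j+\tfrac{r}{k}\bigr)\Bigr)^{-s},
\]
and applies to each inner sum a generalized Euler--MacLaurin formula of Sidi (Theorem~2.1 of \cite{Sid04}) designed for offset Riemann sums of integrands with algebraic endpoint singularities. That theorem directly outputs the Hurwitz values $\zeta(s,r/k)+\zeta(s,1-r/k)$ and $\zeta(s-2,r/k)+\zeta(s-2,1-r/k)$, with a remainder $O(n^{s-5})$ already built in. Summing against $\chi(r)$, the would-be dominant term $n\int_0^1\sin(\pi x)^{-s}\,dx$ is annihilated by $\sum_r\chi(r)=0$, and the Hurwitz sums collapse to $2k^{s}L(s,\chi)$ and $2k^{s-2}L(s-2,\chi)$ via evenness and the identity $L(s,\chi)=k^{-s}\sum_r\chi(r)\zeta(s,r/k)$. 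The $O(n^{-4})$ error thus comes for free from the cited black box; nothing has to be cancelled by hand.

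Your derivative identity $\sum_l c_l(s)(s-2l)(\pi/2)^{2l-s}=0$ is correct and does kill the leading error layer: once $\sum_r\chi(r)=\sum_r\chi(r)r=0$, the first nonzero term of $E_N(t)$ is a constant multiple of $t\,N^{-t-1}$, and summing $c_l(s)(\pi/(kn))^{2l}\cdot(s-2l)N^{2l-s-1}$ over $l$ with $N\sim kn/2$ lands exactly on your identity. But after that cancellation the residual is still only $O(n^{-\Re(s)-2})$, so for $\Re(s)<2$ you must iterate, matching higher derivatives of $\sin(u)^{-s}$ at $u=\pi/2$ against the subleading coefficients of $E_N$; and $N=\lfloor(kn-1)/2\rfloor$ is generally not a multiple of $k$, which perturbs your Hurwitz-tail formula by boundary terms at every layer. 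None of this is visibly fatal, but it is real bookkeeping that the paper's residue-class decomposition plus Sidi's formula bypasses in one stroke.
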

This leads us into an interesting reformulation of the Generalized Riemann Hypothesis (GRH) (reminiscent to Riemann's case in \cite{FK16}, although there one also has a $\zeta_{\bbZ}$ term).
\begin{theorem}\label{MainThm}
	Let $\chi$ be a primitive and even Dirichlet character modulo $k\geq 3$. For $0<\Re(s)<1,$ let $$\xi_n(s,\chi)=n^{-s}(\frac{\pi}{k})^{\frac{s}{2}}\Gamma(\frac{s}{2})L_n(s,\chi)$$ be the (normalized) completed $L$-function of $\bbZ/kn\bbZ$. The following are equivalent:
	\begin{description}
		\item[(i)] For all $s$ such that $0<\Re(s)<1$ and $\Im(s)\geq 8$ we have $$\lim\limits_{n \to \infty}\frac{|\xi_n(s,\chi)|}{|\xi_n(1-s,\overline{\chi})|}=1;$$ 
		\item[(ii)] In the region $\Im(s)\geq 8$, $L(s,\chi)$ satisfies the Generalized Riemann Hypothesis (GRH), that is all zeros in this region have real part $\frac{1}{2}$. 
	\end{description}
\end{theorem}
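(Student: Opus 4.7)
The plan is to combine Proposition~\ref{MainProp} with two symmetries: the classical functional equation of the completed Dirichlet $L$-function, and the conjugation identity built into $\xi_n$. Multiplying the asymptotic of Proposition~\ref{MainProp} by $n^{-s}(\pi/k)^{s/2}\Gamma(s/2)$ and simplifying yields
\[
\xi_n(s,\chi)=2\Lambda(s,\chi)+\frac{c(s,\chi)}{n^{2}}+O(n^{-4}),
\]
where $\Lambda(s,\chi):=(k/\pi)^{s/2}\Gamma(s/2)L(s,\chi)$ is the classical completed even-character Dirichlet $L$-function; using $\Gamma(s/2)=\frac{s-2}{2}\Gamma((s-2)/2)$ one rewrites $c(s,\chi)=\frac{\pi s(s-2)}{6k}\Lambda(s-2,\chi)$. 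The two facts I will repeatedly use are: (a) the classical functional equation $\Lambda(s,\chi)=W(\chi)\Lambda(1-s,\overline{\chi})$ with $|W(\chi)|=1$, so $|\Lambda(s,\chi)|=|\Lambda(1-s,\overline{\chi})|$; and (b) the tautology $\overline{\xi_n(s,\chi)}=\xi_n(\bar s,\overline{\chi})$, which is immediate from the definition since the sines are real and positive.

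For the implication (ii)$\Rightarrow$(i) I split on the position of $s$. If $\Re s=\tfrac12$ then $1-s=\bar s$, and (b) gives $|\xi_n(1-s,\overline{\chi})|=|\xi_n(s,\chi)|$ for every $n$, so the ratio is identically $1$ (in particular its limit is $1$). If $\Re s\neq\tfrac12$, GRH in the region forces $\Lambda(s,\chi)\neq 0$, and by (a) also $\Lambda(1-s,\overline{\chi})\neq 0$; then the leading term of the expansion dominates both numerator and denominator and the ratio tends to $|\Lambda(s,\chi)|/|\Lambda(1-s,\overline{\chi})|=1$ by (a).

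For the converse (i)$\Rightarrow$(ii) I argue contrapositively. Suppose $s_0$ is a zero of $L(\cdot,\chi)$ in the region with $\Re s_0\neq\tfrac12$. Then (a) forces $\Lambda(1-s_0,\overline{\chi})=0$ as well, so the leading term vanishes in both numerator and denominator, and the expansion above yields
\[
\lim_{n\to\infty}\frac{|\xi_n(s_0,\chi)|}{|\xi_n(1-s_0,\overline{\chi})|}=\frac{|c(s_0,\chi)|}{|c(1-s_0,\overline{\chi})|}=\frac{|s_0(s_0-2)|}{|(1-s_0)(1+s_0)|}\cdot\frac{|\Lambda(3-s_0,\overline{\chi})|}{|\Lambda(2+s_0,\chi)|},
\]
where the second equality uses (a) applied to $\Lambda(s_0-2,\chi)$ and to $\Lambda(-1-s_0,\overline{\chi})$. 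It remains to show that this quantity differs from $1$ whenever $\Re s_0\neq\tfrac12$, which would contradict (i).

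This last step is where the real work lies and I expect it to be the main obstacle. The two factors both equal $1$ on the critical line (the polynomial factor by the symmetry $\sigma\leftrightarrow 1-\sigma$ applied to $\sigma^2+t^2$ and to $(\sigma-2)^2+t^2$; the $\Lambda$-factor because $3-s_0$ and $\overline{2+s_0}$ coincide there and $|\Lambda(\bar w,\overline{\chi})|=|\Lambda(w,\chi)|$). For $\Re s_0\neq\tfrac12$ a direct expansion of the polynomial factor shows it deviates from $1$ with sign $-\mathrm{sign}(\Re s_0-\tfrac12)$ (this is where $\Im s_0\geq 8$ enters, to ensure the numerator $4(\Re s_0-\tfrac12)^2+4t^2-3$ in that expansion is positive). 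The arguments of the two $\Lambda$-factors then lie in the halfplane $\Re w>1$ where the Dirichlet series converges absolutely and is non-vanishing, so Stirling for the $\Gamma$-ratio combined with absolutely convergent bounds for the ratio of $L$-values should determine the leading behavior of the $\Lambda$-factor and exclude the accidental cancellation with the polynomial factor, closing the equivalence.
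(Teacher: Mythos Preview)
Your reduction is the same as the paper's: expand $\xi_n$ by Proposition~\ref{MainProp}, observe that for $L(s,\chi)\neq 0$ the leading terms give limit $1$ by the functional equation, and that at a zero $s_0$ the limit becomes $|c(s_0,\chi)|/|c(1-s_0,\overline{\chi})|$. Your computation of this ratio and the analysis of the rational factor $|s_0(s_0-2)|/|(1-s_0)(1+s_0)|$ are correct. The paper does exactly this (with the notation $\alpha$ for your $c$), rewriting the condition $|c(s_0,\chi)|=|c(1-s_0,\overline{\chi})|$ as
\[
\Bigg|\frac{L(s_0+2,\chi)}{L(s_0-2,\chi)}\Bigg|=\frac{4\pi^2}{k^2|s_0^2-1|}.
\]

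The genuine gap is your last paragraph. You acknowledge it yourself (``I expect it to be the main obstacle''), but your plan --- Stirling on the $\Gamma$-ratio plus absolutely convergent bounds on the $L$-ratio --- is not developed, and it is not clear it would close the argument: you have one factor moving away from $1$ in a definite direction, but nothing preventing the $\Lambda$-factor from compensating exactly. The paper resolves this by a monotonicity argument (its Lemma~\ref{mazwsa}): for fixed $|t|\geq 8$, the function $\sigma\mapsto |L(s+2,\chi)/L(s-2,\chi)|$ is strictly increasing on $0<\sigma<1$, while $\sigma\mapsto 4\pi^2/(k^2|s^2-1|)$ is strictly decreasing; since both equal each other at $\sigma=\tfrac12$, equality off the line is impossible. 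The monotonicity is proved by showing $\Re\big(-L'(s-2,\chi)/L(s-2,\chi)\big)>\big|\Re\big(L'(s+2,\chi)/L(s+2,\chi)\big)\big|$: the right side is bounded by $-\zeta'(2)/\zeta(2)<0.57$ via the absolutely convergent Dirichlet series, and the left side is pushed above $0.57$ using $\Re(\xi'/\xi)<0$ left of the critical strip (Hadamard product input from \cite{MSZ14}) together with a lower bound on $\Re\,\psi((s-2)/2)$.

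One correction: the hypothesis $\Im(s)\geq 8$ is \emph{not} needed for your polynomial factor (your quantity $4(\sigma-\tfrac12)^2+4t^2-3$ is positive already for $|t|\geq 1$). In the paper it enters only in the digamma estimate for $\Re\,\psi((s-2)/2)$, i.e.\ precisely in the step you left as a sketch. So your proposal has the right skeleton, but the heart of the proof --- excluding cancellation between the two factors --- still needs the monotonicity lemma (or an equivalent quantitative argument), and that is where the constant $8$ actually lives.
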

We will see in the proof that the relation in (i) holds in any case for all $s$ such that $L(s,\chi)\neq 0$ and for obvious reasons on the critical line $\Re(s)=\frac{1}{2}$ for each $n$.\\

\noindent
For comparison, recall that if we let $\xi(s,\chi)=(\frac{\pi}{k})^{\frac{-s}{2}}\Gamma(\frac{s}{2})L(s,\chi)$ then the functional equation implies that $|\xi(s,\chi)|=|\xi(1-s,\overline{\chi})|$.\\

\noindent
In view of Proposition \ref{MainProp} there is a more obvious way to relate properties of our $L$-functions of cyclic graphs to GRH. Indeed, since the holomorphic functions $\frac{1}{2}\left(\frac{\pi}{kn}\right)^sL_n(s,\chi)$ converge to the corresponding Dirichlet $L$-function, it would be possible to study zeros of these finite sums to get information about zeros of $L$ (for example using Hurwitz theorem). Also we see that the rate of convergence of the sequence $L_n$ changes according to whether $L(s,\chi)=0$ or not. Nervertheless we believe the result given in Theorem \ref{MainThm} is more unexpected and structural, in that it unveils a relationship between zeros of $L$ and an asymptotic functional equation for $L_n$ of the usual type $s\longleftrightarrow 1-s$. A weaker (additive) version of the latter holds unconditionally by Proposition \ref{MainProp}, but the remarkable thing is that a strengthening of this property is ruled by GRH.
\\

\noindent
\textbf{Acknowledgement. }The author gratefully thanks Anders Karlsson for useful discussions, comments and corrections to this paper. This work was also partly inspired by a question raised by François Ledrappier following a lecture about \cite{FK16}.
\end{section}
\begin{section}{Asymptotics of $L_n(s,\chi)$}
In this section we derive an asymptotic formula for $L_n(s,\chi)$ when the parameter $n\rightarrow\infty$.
\begin{proof}[Proposition \ref{MainProp}]
The proof is based on an Euler-MacLaurin formula established in \cite{Sid04}. Indeed, let $0<\Re(s)<1$ and apply Theorem 2.1 in \cite{Sid04} to the function $f(x)=\frac{1}{(\sin(\pi x))^{s}}$, with $a=0$ and $b=1$ to get (for any $0<\theta<1$) \begin{align*}\frac{1}{n}\sum_{j=0}^{n-1}\frac{1}{(\sin(\frac{\pi}{n}(j+\theta)))^{s}}= \int_0^1\frac{1}{(\sin(\pi x))^{s}}dx + \pi^{-s}\left(\zeta(s,\theta)+\zeta(s,1-\theta)\right)\frac{1}{n^{1-s}}\\+\pi^{2-s}\frac{s}{6}\left(\zeta(s-2,\theta)+\zeta(s-2,1-\theta)\right)\frac{1}{n^{3-s}}\\+O\left(\frac{1}{n^{5-s}}\right),
\end{align*}
since near $0$ we have $f(x)=x^{-s}(\pi^{-s}+\frac{s}{6}\pi^{2-s}x^2+O(x^4))$. Here $\zeta(s,\theta)$ denotes the Hurwitz zeta function.\\
Now we notice that $$L_n(s,\chi)=\sum_{r=1}^{k-1}\chi(r)\sum_{j=0}^{n-1}\frac{1}{\sin(\frac{\pi}{n}(j+\frac{r}{k}))^{s}}$$ using periodicity of characters. The following identity is well-known: 
\begin{equation}\label{L-Hurwitz}
	L(s,\chi)=\frac{1}{k^s}\sum_{m=1}^k\chi(m)\zeta(s,m/k),
\end{equation} 
with $\chi$ as before and $s\neq 1$. Putting everything together, we obtain 
\begin{align*}
L_n(s,\chi)&=\left(\frac{n}{\pi}\right)^{s}\sum_{r=1}^{k-1}\chi(r)\left(\zeta(s,r/k)+\zeta(s,1-r/k)\right)\\&+\left(\frac{n}{\pi}\right)^{s-2}\frac{s}{6}\sum_{r=1}^{k-1}\chi(r)\left(\zeta(s-2,r/k)+\zeta(s-2,1-r/k)\right)+O\left(n^{s-4}\right)\\ &=2\left(\frac{n}{\pi}\right)^{s}\sum_{r=1}^{k-1}\chi(r)\zeta(s,r/k)+2\left(\frac{n}{\pi}\right)^{s-2}\frac{s}{6}\sum_{r=1}^{k-1}\chi(r)\zeta(s-2,r/k)+O\left(n^{s-4}\right),
\end{align*}
where we used the fact that $$\sum_{r=1}^{k-1}\chi(r)\zeta(s,1-\frac{r}{k})=\sum_{r=1}^{k-1}\chi(k-r)\zeta(s,\frac{k-r}{k})=\sum_{r=1}^{k-1}\chi(r)\zeta(s,\frac{r}{k}).$$
Now we can apply (\ref{L-Hurwitz}) and conclude by analytic continuation.
\end{proof}
\begin{remark}
\emph{We can obtain as many terms as we want in the proposition by looking further in the asymptotics of $f$ near $0$.}
\end{remark}
\end{section}
\begin{section}{Relation with GRH}
A natural question to ask is whether there is a kind of functional equation for $L_n$. Indeed, Proposition \ref{MainProp} trivially implies $\lim_{n\rightarrow\infty}\left(\xi_n(s,\chi)-\xi_n(1-s,\overline{\chi})\right)=0$. But \cite{FK16} suggests looking for a stronger version and an equivalence to GRH. This is the content of Theorem \ref{MainThm} which we now begin to prove. 
\\

\noindent
In what follows, we write $G(\chi)=\sum_{l=0}^{k-1}\chi(l)e^{\frac{2\pi i l}{k}}$ for the Gauss sum of $\chi$. Recall that $L(s,\chi)$ satisfies a functional equation: write $\xi(s,\chi)=(\frac{\pi}{k})^{\frac{-s}{2}}\Gamma(\frac{s}{2})L(s,\chi)$ for the completed $L$-function. Then (see \cite{Cha70}), $$\xi(s,\chi)=\frac{G(\chi)}{\sqrt{k}}\xi(1-s,\overline{\chi}).$$ We begin the proof with the following lemma which concerns only Dirichlet $L$-functions.
\begin{lemma}\label{mazwsa}
Let $t\in\bbR$ be fixed with $|t|\geq 8$ and write $s=\sigma+it$. Then the function $$\Bigg|\frac{L(s+2,\chi)}{L(s-2,\chi)}\Bigg|$$ is strictly increasing in $0<\sigma<1$.
\end{lemma}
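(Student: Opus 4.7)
The plan is to use the functional equation of $L(\cdot,\chi)$ to push $L(s-2,\chi)$ into the half-plane of absolute convergence, then show that the logarithmic derivative in $\sigma$ of the ratio is strictly positive on $(0,1)$ for $|t|\geq 8$. From $\xi(s-2,\chi)=(G(\chi)/\sqrt k)\,\xi(3-s,\overline{\chi})$ and $|G(\chi)|=\sqrt k$ one obtains
\[
|L(s-2,\chi)| = \Bigl(\tfrac{\pi}{k}\Bigr)^{\sigma-5/2}\Bigl|\tfrac{\Gamma((3-s)/2)}{\Gamma((s-2)/2)}\Bigr|\,|L(3-s,\overline{\chi})|,
\]
in which both $s+2$ and $3-s$ have real part in $(2,3)$, so both $L$-values on the right lie in the region where the Dirichlet series converges absolutely.

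Set $F(\sigma):=\log|L(s+2,\chi)/L(s-2,\chi)|$. Differentiating in $\sigma$ with $t$ fixed gives
\[
F'(\sigma) = \Re\tfrac{L'(s+2,\chi)}{L(s+2,\chi)} + \Re\tfrac{L'(3-s,\overline{\chi})}{L(3-s,\overline{\chi})} + \log\tfrac{k}{\pi} + \tfrac{1}{2}\Re\!\left[\psi\!\left(\tfrac{s-2}{2}\right)+\psi\!\left(\tfrac{3-s}{2}\right)\right].
\]
I would then bound the four contributions separately. Each $\Re(L'/L)$ is dominated in absolute value by $\sum_n\Lambda(n)/n^{\Re w}$ with $\Re w\geq 2$, which is at most $-\zeta'(2)/\zeta(2)<0.58$. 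The constant $\log(k/\pi)$ is at least $\log(3/\pi)>-0.05$ for $k\geq 3$. For the digamma sum, $(3-s)/2$ has positive real part and Stirling gives $\Re\psi((3-s)/2)=\log|(3-s)/2|+O(1/|t|)$; for $(s-2)/2$, which has negative real part, the reflection formula $\psi((s-2)/2)=\psi((4-s)/2)-\pi\cot(\pi(s-2)/2)$ reduces matters to Stirling on $(4-s)/2$ (now in the Stirling sector) plus a cotangent whose real part is $O(e^{-\pi|t|})$.

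Using $(3-\sigma)^2\geq 4$ and $(4-\sigma)^2\geq 9$ on $(0,1)$, these estimates combine to
\[
\tfrac{1}{2}\Re\!\left[\psi((s-2)/2)+\psi((3-s)/2)\right] \geq \tfrac{1}{4}\log\frac{(9+t^2)(4+t^2)}{16} - O(1/|t|),
\]
which for $|t|\geq 8$ exceeds $1.38$. Together with the other bounds this yields $F'(\sigma)\geq 1.38-1.16-0.05-o(1)>0$ on $(0,1)$, and the monotonicity follows.

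The main obstacle is that this inequality is tight at the boundary $|t|=8$: the elementary lower bound on the digamma sum only just beats the combined $L'/L$ and $\log(k/\pi)$ contributions, so one must track the Stirling corrections and the exponentially small cotangent contribution carefully enough to close the estimate. The threshold $|t|\geq 8$ appearing in the statement appears to reflect exactly this balance, and a sharper quantitative analysis would likely allow a smaller cutoff.
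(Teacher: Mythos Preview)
Your argument is correct and reaches the same conclusion, but the route differs from the paper's. Both proofs reduce to showing that the $\sigma$-derivative of $\log\bigl|L(s+2,\chi)/L(s-2,\chi)\bigr|$ is positive on $0<\sigma<1$. The paper keeps the decomposition as $\Re\frac{L'}{L}(s+2,\chi)-\Re\frac{L'}{L}(s-2,\chi)$, bounds the first term by $-\zeta'(2)/\zeta(2)<0.57$ via the Dirichlet series, and handles the second term through the Hadamard product: writing $\Re\frac{\xi'}{\xi}(s-2,\chi)=\sum_\rho\frac{\sigma-2-\Re\rho}{|s-2-\rho|^2}<0$ (since $\sigma-2<0\le\Re\rho$), one obtains $\Re\bigl(-\frac{L'}{L}(s-2,\chi)\bigr)>\tfrac12\log(k/\pi)+\tfrac12\Re\psi\bigl(\tfrac{s-2}{2}\bigr)$, and then a digamma estimate from \cite{MSZ14} closes the inequality for $|t|\ge 8$.

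You instead apply the functional equation at the outset to replace $L(s-2,\chi)$ by $L(3-s,\overline{\chi})$, pushing \emph{both} $L$-values into $\Re>2$ and picking up the explicit Gamma quotient. This trades the Hadamard-product input for a second $|\Re(L'/L)|\le 0.58$ bound plus a direct Stirling/reflection estimate on $\tfrac12\Re[\psi((s-2)/2)+\psi((3-s)/2)]$. The two approaches are essentially dual: the paper's Hadamard step is replaced in yours by the functional equation and an extra digamma term, and numerically the margins at $|t|=8$ come out comparable (your $1.38-1.16-0.05>0$ versus the paper's $0.60-0.03>0.57$). Your version is more self-contained, at the cost of having to track the Stirling corrections of order $1/|t|$ explicitly; the paper's version is shorter because the zero-sum sign and the digamma bound are quoted from \cite{MSZ14}.
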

\begin{proof}
The proof relies on a recent paper by Matiyasevich, Saidak and Zvengrowski (\cite{MSZ14}). By Lemma 2.1 in that paper, it is enough to prove that the real part of the logarithmic derivative of $\frac{L(s+2,\chi)}{L(s-2,\chi)}$ is positive for all $s$ as in the statement of the lemma. Thus we are done if we show that $$\Re\left(-\frac{L'(s-2,\chi)}{L(s-2,\chi)}\right)>\Bigg|\Re\left(\frac{L'(s+2,\chi)}{L(s+2,\chi)}\right)\Bigg|.$$
The right-hand side is easy to estimate. Indeed, since $\Re(s+2)>1$ we have \begin{align*}\Bigg|\Re\left(\frac{L'(s+2,\chi)}{L(s+2,\chi)}\right)\Bigg|&=\Bigg|-\Re\left(\sum_{n\geq 1}\frac{\Lambda(n)\chi(n)}{n^{s+2}}\right)\Bigg|\leq\Bigg|\sum_{n\geq 1}\frac{\Lambda(n)\chi(n)}{n^{s+2}}\Bigg| \\ &\leq \sum_{n\geq 1}\frac{\Lambda(n)}{n^{\sigma+2}}\leq \sum_{n\geq 1}\frac{\Lambda(n)}{n^2} = -\frac{\zeta'(2)}{\zeta(2)}<0.57.
\end{align*}
For the left-hand side we first use the definition of $\xi$ to see that $$\Re\left(\frac{\xi'(s-2,\chi)}{\xi(s-2,\chi)}\right)=\frac{1}{2}\log \left(\frac{k}{\pi}\right)+\frac{1}{2}\Re\left(\psi\left(\frac{s-2}{2}\right)\right)+\Re\left(\frac{L'(s-2,\chi)}{L(s-2,\chi)}\right),$$ where $\psi(z):=\frac{\Gamma'(z)}{\Gamma(z)}$ is the logarithmic derivative of $\Gamma$.
\\
Since $0<\sigma<1$ we have $\Re\left(\frac{\xi'(s-2,\chi)}{\xi(s-2,\chi)}\right)<0$, as is shown in \cite{MSZ14} using the Hadamard product. Thus, \begin{align*}
\Re\left(-\frac{L'(s-2,\chi)}{L(s-2,\chi)}\right)&>\frac{1}{2}\log \left(\frac{k}{\pi}\right)+\frac{1}{2}\Re\left(\psi\left(\frac{s-2}{2}\right)\right)\\&>\frac{1}{2}\Re\left(\psi\left(\frac{s-2}{2}\right)\right)-0.03\\&>0.57,
\end{align*}
where we used $k\geq 3$ in the second inequality and Lemma 3.2 (v) of \cite{MSZ14} (with our hypothesis that $|t|\geq 8$) in the last one.
\end{proof}
We now proceed to prove the equivalence stated in Theorem \ref{MainThm}.
\begin{proof}[Theorem \ref{MainThm}]
By Proposition \ref{MainProp} we have
\begin{align}\label{asympt}
\xi_n(s,\chi)&=2\left(\frac{\pi}{k}\right)^{-\frac{s}{2}}\Gamma\left(\frac{s}{2}\right)\Bigg(L(s,\chi)+\frac{s}{6}\left(\frac{kn}{\pi}\right)^{-2}L(s-2,\chi)+O\left(\frac{1}{n^4}\right)\Bigg)\nonumber\\&= 2\xi(s,\chi)+\alpha(s,\chi)\frac{1}{n^2}+O\left(\frac{1}{n^4}\right),
\end{align} 
where we denote $\alpha(s,\chi):=\frac{s}{3}(\frac{\pi}{k})^{2-\frac{s}{2}}\Gamma(\frac{s}{2})L(s-2,\chi)$.
\\
Since $G(\chi)$ has modulus $\sqrt{k}$, the functional equation for $\xi$ together with (\ref{asympt}) implies that part (i) of the theorem is true for any $s$ which is not a zero of $L$. We want to know when it is true also for other values of $s$, namely those corresponding to zeros of $L$. For this we observe that 
\begin{align*}
\Bigg|\alpha(1-s,\overline{\chi})\Bigg|&=\Bigg|\frac{1-s}{3}\left(\frac{\pi}{k}\right)^{2-\frac{1-s}{2}}\Gamma\left(\frac{1-s}{2}\right)L(-s-1,\overline{\chi})\Bigg|\\&=\Bigg|\frac{1-s}{3}\left(\frac{\pi}{k}\right)^{\frac{3}{2}+\frac{s}{2}}\Gamma\left(\frac{1-s}{2}\right)\xi(-s-1,\overline{\chi})\left(\frac{\pi}{k}\right)^{\frac{-s-1}{2}}\frac{1}{\Gamma(\frac{-s-1}{2})}\Bigg|\\&=\Bigg|\frac{(s-1)(s+1)}{6}\frac{\pi}{k}\Gamma\left(\frac{s+2}{2}\right)L(s+2,\chi)\left(\frac{\pi}{k}\right)^{-\frac{s+2}{2}}\Bigg|\\&=\Bigg|\frac{s(s-1)(s+1)}{12}\left(\frac{\pi}{k}\right)^{-\frac{s}{2}}\Gamma\left(\frac{s}{2}\right)L(s+2,\chi)\Bigg|
\end{align*}
and so the identity \begin{equation}\label{alpha}
|\alpha(s,\chi)|=|\alpha(1-s,\overline{\chi})|
\end{equation} is equivalent to the following:
\begin{equation}\label{alphaequiv}
\Bigg|\frac{L(s+2,\chi)}{L(s-2,\chi)}\Bigg|=\frac{4\pi^2}{k^2|s^2-1|}.
\end{equation}
We note that identities (\ref{alpha}) and (\ref{alphaequiv}) are true when $\Re(s)=\frac{1}{2}$, because $$\alpha(1-(1/2+it),\overline{\chi})=\alpha(\overline{1/2+it},\overline{\chi})=\overline{\alpha(1/2+it,\chi)}.$$
Clearly, the right-hand side of (\ref{alphaequiv}) is strictly decreasing in $0<\sigma<1$. By Lemma \ref{mazwsa} the left-hand side is strictly increasing in $0<\sigma<1$ provided $\Im(s)\geq 8$. Thus, (\ref{alphaequiv}) can be true for only one value of $\sigma$ and this is $\sigma=\frac{1}{2}$.
\\

\noindent
This finishes the proof of Theorem \ref{MainThm}, since  when $s$ is a zero of $L$, the assertion $$\lim\limits_{n \to \infty}\frac{|\xi_n(s,\chi)|}{|\xi_n(1-s,\overline{\chi})|}=1$$ is equivalent to (\ref{alpha}).
\end{proof}
\end{section}
\begin{section}{On real zeros of $L(s,\chi)$}
It is an important and difficult problem to show that Dirichlet $L$-functions have no real zeros. Even partial answers to this question have many consequences, in particular in the theory of primes in arithmetic progressions and in the theory of quadratic forms and class numbers initiated by Gauss. Particular cases such as the non-vanishing of $L(\frac{1}{2},\chi)$ or the existence of a Siegel zero are much studied and still open in general.
\\

\noindent
Consider $\chi$ a primitive, even and \emph{real} character modulo $k\geq 3$ (if $k$ is odd this is the Jacobi symbol modulo $k$). It is interesting to observe that if we suppose that $0<s<1$ is a real zero of the associated Dirichlet $L$-function then Proposition \ref{MainProp} tells us that $L_n(s,\chi)<0$ for all $n$ sufficiently large, since $L(s,\chi)$ is negative for $-2<s<0$. Thus the possible existence of real zeros of $L(s,\chi)$ is encoded in the $L$-function of a very simple graph.
\begin{corollary}\label{positivity}
Let $\chi$ be a primitive, even and real character modulo $k\geq 3$ and let $0<s<1$. The following are equivalent:
\item[(i)] $L(s,\chi)>0$;
\item[(ii)] For infinitely many $n\geq 1$ we have $L_n(s,\chi)\geq 0$, that is $\sum_{j=1}^{kn-1}\frac{\chi(j)}{\sin(\frac{\pi j}{kn})^{s}}\geq 0$.
\end{corollary}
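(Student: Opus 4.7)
The plan is to read the sign of $L_n(s,\chi)$ directly off the asymptotic expansion of Proposition \ref{MainProp}. For real $s\in(0,1)$ that expansion gives
\[
L_n(s,\chi)=2\left(\frac{kn}{\pi}\right)^{s}\!\left(L(s,\chi)+\frac{s}{6}\left(\frac{kn}{\pi}\right)^{-2}\!L(s-2,\chi)+O(n^{-4})\right),
\]
and since the prefactor $2(kn/\pi)^{s}$ is positive, for large $n$ the sign of $L_n(s,\chi)$ is controlled by the first nonvanishing term inside the parenthesis.

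The implication (i)$\Rightarrow$(ii) is then immediate: if $L(s,\chi)>0$, the leading term dominates and $L_n(s,\chi)>0$ for every sufficiently large $n$, which certainly gives (ii). For the converse I would argue by contrapositive and show that $L(s,\chi)\leq 0$ forces $L_n(s,\chi)<0$ for all large $n$, so that (ii) fails. When $L(s,\chi)<0$ this is again controlled by the leading term. The delicate case is $L(s,\chi)=0$, where the expansion collapses to
\[
L_n(s,\chi)=\frac{s}{3}\left(\frac{kn}{\pi}\right)^{s-2}L(s-2,\chi)+O(n^{s-4}),
\]
so the sign of $L_n(s,\chi)$ for large $n$ is determined by the sign of $L(s-2,\chi)$. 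Since $s\in(0,1)$ forces $s-2\in(-2,-1)\subset(-2,0)$, this reduces the whole statement to the fact, already flagged in the text, that $L(\sigma,\chi)<0$ for every real $\sigma\in(-2,0)$.

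The only piece of real content is therefore justifying that negativity. The plan is classical: for a primitive, even, real character $\chi$ one has $\overline{\chi}=\chi$ and $G(\chi)=+\sqrt{k}$ (Gauss's determination of the sign of the Gauss sum for primitive even real characters), so the functional equation $\xi(\sigma,\chi)=\frac{G(\chi)}{\sqrt{k}}\xi(1-\sigma,\overline{\chi})$ reduces to $\xi(\sigma,\chi)=\xi(1-\sigma,\chi)$, giving
\[
L(\sigma,\chi)=\left(\frac{\pi}{k}\right)^{\sigma-\frac{1}{2}}\frac{\Gamma((1-\sigma)/2)}{\Gamma(\sigma/2)}L(1-\sigma,\chi).
\]
For $\sigma\in(-2,0)$ the argument $1-\sigma$ lies in $(1,3)$, so absolute convergence of the Euler product together with $\chi(p)\in\{-1,0,+1\}$ gives $L(1-\sigma,\chi)>0$; the $\Gamma$-ratio is negative because $\Gamma$ is positive on $(1/2,3/2)$ but negative on $(-1,0)$; and the power of $\pi/k$ is positive. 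Hence $L(\sigma,\chi)<0$ on $(-2,0)$, as needed.

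The main obstacle, such as it is, consists in getting the sign of the Gauss sum and of the $\Gamma$-ratio right; no step is deep, but each must be correct for the $L(s,\chi)=0$ case, which is precisely the interesting situation the corollary is meant to illuminate, since it says that a real zero of $L(\cdot,\chi)$ in $(0,1)$ is detected by an eventual change of sign of the very concrete trigonometric sums $L_n(s,\chi)$.
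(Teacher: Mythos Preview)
Your argument is correct and follows exactly the route the paper indicates: read off the sign of $L_n(s,\chi)$ from Proposition~\ref{MainProp}, with the case $L(s,\chi)=0$ governed by the sign of $L(s-2,\chi)$. The paper states the negativity of $L(\sigma,\chi)$ on $(-2,0)$ without justification; your derivation of it from the functional equation, Gauss's sign $G(\chi)=+\sqrt{k}$, and the sign of $\Gamma$ on $(-1,0)$ is a welcome and correct completion of that step.
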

A similar result is avalaible if we consider the standard partial sums $\sum_{j=1}^{kn-1}\frac{\chi(j)}{j^s}$ instead, see \cite{Chu05}. We refer to \cite{FK16} for a heuristic explanation as to why these partial sums with sines, although being seemingly more involved, may be an interesting alternative to the usual partial sums in the study of Dirichlet $L$-functions.
\\

\noindent
Now we would like to mention an interesting problem related to the study of the sign of $L_n$ and say a few words about its analog in the classical case. The problem comes from the following simple observation: if we write $$\frac{1}{(1-x)^s}=\sum_{m\geq 0}a_m(s)x^m$$ for $0<s<1$, we see that $$\sum_{j=1}^{kn-1}\frac{\chi(j)}{\sin(\frac{\pi j}{kn})^{s}}=\sum_{m\geq 1}a_m\left(s/2\right)\sum_{j=1}^{kn-1}\chi(j)\cos^{2m}\left(\frac{\pi j}{kn}\right).$$
It is then natural to ask:
\begin{question}\label{q1}
Let $\chi$ be a primitive, even and real character modulo $k\geq 3$. Does there exist a sequence $(a_n)$ of positive integers such that \begin{equation}\label{posofcos}
\sum_{j=1}^{ka_n-1}\chi(j)\cos^{2m}\left(\frac{\pi j}{ka_n}\right)\geq 0
\end{equation}
for every $m\geq 1$?
\end{question}
Note that this claim is independent of $s$. Since $a_m(s/2)>0$ for all $0<s<1$ and $m\geq 1$, a positive answer to Question \ref{q1} would imply that $L(s,\chi)$ has no real zero. Obviously it is a much stronger statement but it is not clear to us whether it is true or not and we think it is an interesting problem to investigate.
\\

\noindent 
It may be a good idea to draw a comparison with the classical case, that is when we consider the sign of the partial sums $\sum_{j=1}^{kn-1}\frac{\chi(j)}{j^s}$. We already mentioned the paper \cite{Chu05} where it is shown that it is enough to establish that $\sum_{j=1}^{k-1}\frac{\chi(j)}{j^s}$ is positive in order to prove that there is no real zero. We can make the same computation as above: $$\sum_{j=1}^{k-1}\frac{\chi(j)}{(\frac{j}{k})^s}=\sum_{m\geq 0}a_m(s)\sum_{j=1}^{k-1}\chi(j)\left(1-\frac{j}{k}\right)^m=\sum_{m\geq 0}a_m(s)\sum_{j=1}^{k-1}\chi(j)\left(\frac{j}{k}\right)^m,$$ where we used the fact that $\chi$ is even in the last equality. Thus if $$S(m,\chi):=\sum_{j=1}^{k-1}\chi(j)j^m$$ was non-negative for all $m\geq 0$ (it is well-known that $S(0,\chi)=S(1,\chi)=0$) it would imply that $L(s,\chi)$ has no real zero. This was already observed by Rosser in \cite{Ros49}. The analog to Question \ref{q1} is then:
\begin{question}\label{q2}
Let $\chi$ be a primitive, even and real character modulo $k\geq 3$. Is it true that $$S(m,\chi)\geq 0$$ for every $m\geq 1$?
\end{question}
It is possible to show that $S(m,\chi)>0$ if $m\geq k-2$, simply by showing that the term with $j=k-1$ dominates the rest of the sum. Thus if we are given a character only a finite number of sums need to be checked in order to answer Question \ref{q2}. On the other hand, there is a nice formula available for $S(m,\chi)$, probably well-known, which we prove now.
\begin{proposition}
Let $\chi$ be a primitive, even and real character modulo $k\geq 3$. For any $n\geq 1$ and $m\geq 2$ we have
\begin{equation}\label{Faulhaber}
\frac{1}{(kn)^m}\sum_{j=1}^{kn-1}\chi(j)j^m=2n\sqrt{k}\sum_{j=1}^{[\frac{m}{2}]}\frac{(-1)^{j+1}m(m-1)\ldots (m-2j+2)}{(4\pi^2 n^2)^j}L(2j,\chi).
\end{equation}
\end{proposition}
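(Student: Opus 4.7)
The plan is to reduce the finite character moment to an expression involving Bernoulli polynomials evaluated at $r/k$, then use the evenness of $\chi$ to keep only the Bernoulli polynomials of even index, and finally convert those surviving terms into values $L(2j,\chi)$ via the classical Fourier expansion of the Bernoulli polynomials.

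First I would split $j\in\{1,\dots,kn-1\}$ as $j=kq+r$ with $r\in\{1,\dots,k-1\}$ and $q\in\{0,\dots,n-1\}$; periodicity of $\chi$ then gives
$$\sum_{j=1}^{kn-1}\chi(j)j^{m}=k^{m}\sum_{r=1}^{k-1}\chi(r)\sum_{q=0}^{n-1}\left(q+\frac{r}{k}\right)^{m}.$$
The telescoping identity $B_{m+1}(x+1)-B_{m+1}(x)=(m+1)x^{m}$ evaluates the inner sum as $(B_{m+1}(n+r/k)-B_{m+1}(r/k))/(m+1)$. Taylor-expanding $B_{m+1}(n+r/k)$ in powers of $n$ using $B_{m+1}^{(i)}(x)=\frac{(m+1)!}{(m+1-i)!}B_{m+1-i}(x)$ and dividing through by $(kn)^{m}$ produces
$$\frac{1}{(kn)^{m}}\sum_{j=1}^{kn-1}\chi(j)j^{m}=\sum_{\ell=0}^{m}\frac{\binom{m}{\ell}\,n^{1-\ell}}{m+1-\ell}\sum_{r=1}^{k-1}\chi(r)B_{\ell}(r/k).$$

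The next step is to show that only even indices $\ell=2j$ with $1\leq j\leq [m/2]$ contribute. The term $\ell=0$ vanishes because $\sum_{r}\chi(r)=0$ for non-principal $\chi$. For odd $\ell\geq 1$ the reflection $B_{\ell}(1-x)=-B_{\ell}(x)$ combined with the evenness $\chi(k-r)=\chi(r)$ shows, via the substitution $r\mapsto k-r$, that $\sum_{r}\chi(r)B_{\ell}(r/k)$ equals its own negative and therefore vanishes.

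For the surviving even indices, I would substitute the standard identity
$$\sum_{r=1}^{k-1}\chi(r)B_{2j}(r/k)=(-1)^{j+1}\frac{2\sqrt{k}\,(2j)!}{(2\pi)^{2j}}L(2j,\chi),$$
which results from combining the Fourier expansion $B_{2j}(x)=(-1)^{j+1}\frac{2(2j)!}{(2\pi)^{2j}}\sum_{l\geq 1}\cos(2\pi lx)/l^{2j}$ on $[0,1]$ with the finite-Fourier inversion $\sum_{r=1}^{k-1}\chi(r)\cos(2\pi lr/k)=\sqrt{k}\,\chi(l)$, valid for a primitive even real character since $G(\chi)=+\sqrt{k}$. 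The combinatorial simplification $\frac{(2j)!}{m+1-2j}\binom{m}{2j}=\frac{m!}{(m-2j+1)!}=m(m-1)\cdots(m-2j+2)$ together with $(2\pi n)^{2j}=(4\pi^{2}n^{2})^{j}$ then recovers the claimed formula. The proof is thus essentially a careful reassembly of classical facts; the only technical step worth flagging is the precise value $G(\chi)=+\sqrt{k}$ (for which primitivity, reality, and evenness of $\chi$ are all needed), the rest being routine bookkeeping.
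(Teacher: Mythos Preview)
Your proof is correct and follows essentially the same route as the paper's: split into residue classes modulo $k$, evaluate the inner polynomial sum via Bernoulli polynomials (equivalently, Hurwitz zeta values at negative integers, which is how the paper phrases it via Euler--MacLaurin), and use the evenness of $\chi$ to kill the odd-index contributions. The only cosmetic difference is in the final conversion: the paper first lands on $L(1-2j,\chi)$ through the identity $L(s,\chi)=k^{-s}\sum_{r}\chi(r)\zeta(s,r/k)$ and then applies the functional equation of $L$, whereas you reach $L(2j,\chi)$ directly via the Fourier expansion of $B_{2j}$ together with the Gauss-sum evaluation $G(\chi)=\sqrt{k}$---and these two moves are of course equivalent.
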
 
This can be thought of as a kind of Faulhaber formula twisted by a Dirichlet character.
\begin{proof}
We use once again Euler-MacLaurin formula (see for example \cite{Ste50}):
\begin{align*}
\sum_{j=1}^{kn-1}\chi(j)\left(\frac{j}{kn}\right)^m &= \sum_{r=1}^{k-1}\chi(r)\sum_{j=0}^{n-1}\left(\frac{1}{n}(j+\frac{r}{k})\right)^m\\&=\sum_{r=1}^{k-1}\chi(r)\left(\frac{n}{m+1}+\sum_{j=1}^{m}\frac{(-1)^{j+1}\zeta(1-j,\frac{r}{k})}{(j-1)!n^{j-1}}m(m-1)\ldots (m-j+2)\right)\\&=-\sum_{j=1}^{[\frac{m}{2}]}\frac{m(m-1)\ldots (m-2j+2)}{(2j-1)!(kn)^{2j-1}}L(1-2j,\chi),
\end{align*}
where in the last equality we used (\ref{L-Hurwitz}) and the fact that $L(-2m,\chi)=0$ for $m\geq 0$ when $\chi$ is even. We obtain (\ref{Faulhaber}) by applying the functional equation.
\end{proof}
\begin{corollary}\label{corFaul}
Let $\chi$ be a primitive, even and real character modulo $k\geq 3$. For $m=2,3,4,5,6,7$ we have $$S(m,\chi)>0.$$
\end{corollary}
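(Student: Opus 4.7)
The plan is to apply the Faulhaber-type identity \eqref{Faulhaber} with $n=1$, reducing positivity of $S(m,\chi)$ to that of
\begin{equation*}
T_m(\chi) := \sum_{j=1}^{\lfloor m/2\rfloor} \frac{(-1)^{j+1}\,m(m-1)\cdots(m-2j+2)}{(4\pi^2)^j}\,L(2j,\chi).
\end{equation*}
For $2\leq m\leq 7$ one has $\lfloor m/2\rfloor\leq 3$, so $T_m$ involves at most three $L$-values with alternating signs $+,-,+$.

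The key analytic input is positivity and two-sided size control of $L(2j,\chi)$. Since $\chi$ is real and $2j>1$, every Euler factor $(1-\chi(p)/p^{2j})^{-1}$ is a positive real number, so $L(2j,\chi)>0$; a factor-by-factor comparison against $(1+1/p^{2j})^{-1}$ yields the uniform bounds
\begin{equation*}
\frac{\zeta(4j)}{\zeta(2j)}\ \leq\ L(2j,\chi)\ \leq\ \zeta(2j),
\end{equation*}
valid for every primitive even real $\chi$ modulo $k\geq 3$. In particular one has $L(2,\chi)\geq \pi^2/15$, $L(4,\chi)\leq \pi^4/90$, and $L(6,\chi)\geq \zeta(12)/\zeta(6)=691\pi^6/675675$.

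Cases $m=2,3$ are immediate because $T_m$ is then a single positive term. For $m=4,5$ the sum has the form $A-B$ with $A,B>0$; substituting the worst-case values of $L(2,\chi)$ and $L(4,\chi)$ gives $T_4\geq 1/20$ and $T_5\geq 1/24$. For $m=6$ one has $T_6 = A-B+C$ with $C>0$; the same substitution already yields $A-B\geq 1/60$, so the bonus $+C$ term is not even needed.

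The genuinely delicate case, and the main obstacle, is $m=7$. Here the worst-case estimate $A-B\geq 7/60-7/48=-7/240$ is negative, so one is forced to use the third summand $C=\frac{315}{8\pi^6}L(6,\chi)$. Inserting the lower bound on $L(6,\chi)$ gives $C\geq 691/17160$, and the proof reduces to the arithmetic inequality $691/17160>7/240$, which holds with a small but safe margin. This tightness is what makes $m=7$ the largest value treated: pushing the corollary further with only these Euler-product bounds would require either sharper lower bounds on $L(2j,\chi)$ or additional terms in the Euler--Maclaurin expansion underlying \eqref{Faulhaber}.
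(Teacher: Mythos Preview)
Your argument is correct and follows exactly the route indicated in the paper: apply the Faulhaber-type identity \eqref{Faulhaber} with $n=1$ and control the resulting alternating sum in $L(2j,\chi)$ via the standard Euler-product bounds $\zeta(2s)/\zeta(s)\leq L(s,\chi)\leq\zeta(s)$. The paper's proof only sketches this, whereas you carry out the explicit numerics (including the tight case $m=7$, where $691/17160>7/240$ is precisely the verification needed); all of your intermediate values check out.
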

\begin{proof}
Apply (\ref{Faulhaber}) with $n=1$ and $m=2,3,4,5,6$ or $7$ and use the bounds $$\frac{\zeta(2s)}{\zeta(s)}\leq L(s,\chi)\leq \zeta(s),$$ valid for all $s>1$ and all real characters.
\end{proof}
With a more delicate analysis it is in principle possible to extend Corollary \ref{corFaul} to greater values of $m$ but it becomes more and more difficult as $m$ grows. To our knowledge, no example of a character $\chi$ as in the corollary and an integer $m$ such that $S(m,\chi)<0$ has been found.
\\

\noindent 
The same problem for an \emph{odd} character has received much more attention, starting with a paper by Chowla, Ayoub and Walum (\cite{ACB67}) where they show that the sign of $S(3,\chi)$ changes infinitely often when $\chi$ varies. See \cite{Fin70} for a stronger result and \cite{TW99} for subsequent work on this problem. These kind of sums also appear in \cite{Ber76}. Of course, it makes no sense in our situation to consider odd characters since $L_n(s,\chi)\equiv 0$ in that case. We believe that the study of the sign of both sums $S(m,\chi)$ for even $\chi$ and (\ref{posofcos}) are worthy problems and we hope this note will stimulate work on this topic.
\\

\bibliographystyle{plain}
\bibliography{bib-L}
\noindent Fabien Friedli 

\noindent Section de mathématiques \\
Université de Genève

\noindent 2-4 Rue du Lièvre\\
Case Postale 64

\noindent 1211 Genève 4, Suisse 

\noindent e-mail: fabien.friedli@unige.ch

\end{section}
\end{document}